\theoremstyle{plain}
\newtheorem{theorem}{Theorem}[section]
\newtheorem{corollary}[theorem]{Corollary}
\newtheorem{example}[theorem]{Example}
\theoremstyle{remark}
\newcommand{\reel}{\mathbb{R}}
\newcommand{\ent}{\mathbb{Z}}
\newcommand{\comp}{\mathbb{C}}
\newcommand{\wa}{{\rm 1\mkern-4.7mu}{\rm I}}
\newcommand{\tore}{\mathbb{T}}
\newcommand{\eps}{\varepsilon}
\newcommand{\abs}[1]{\left\vert #1\right\vert }
\newcommand{\adh}[1]{\overline{#1} }
\newcommand{\N}[1]{{\left\Vert#1\right\Vert}}
\newcommand{\bg}{\medskip\goodbreak}
\newcommand{\egdef}{\buildrel {\scriptscriptstyle{\rm def}}\over{=}}
\newcommand{\Log}{\operatorname{Log}}
\begin{document}
\title[A Mixte Parseval-Plancherel Formula]
{A Mixte Parseval-Plancherel Formula}
\author[Omran Kouba]{Omran Kouba$^\dag$}
\address{Department of Mathematics \\
Higher Institute for Applied Sciences and Technology\\
P.O. Box 31983, Damascus, Syria.}
\email{omran\_kouba@hiast.edu.sy}
\keywords{Fourier series, Fourier Transform, Oscillatory integrals.}
\subjclass[2010]{42A16, 42A38, 42B20.}
\thanks{$^\dag$ Department of Mathematics, Higher Institute for Applied Sciences and Technology.}

%%%%%%%%%%%%%%%%%%%%%%%%%%%%

\date{\today}
\begin{abstract}
In this note, a general formula is proved.
 It expresses the integral  on the line
of the product of a  function $f$ and a periodic function $g$ in terms of
the Fourier transform of $f$ and the Fourier coefficients of $g$. This allows the evaluation of some
oscillatory integrals.\par
\end{abstract}
\smallskip\goodbreak

\maketitle
%%%%%%%%%%%%%%%%%%%%%%%%
\section{\sc Introduction and Notation}\label{sec1}
\bg

In \cite{Wal} the following integral  was described  as ``difficult'':
\begin{equation}\label{E11}
\int_{-\infty}^{\infty}\frac{dx}{\big(\cosh a+\cos x\big)\cosh x } \quad\text{for $a>0$,}
\end{equation}
it was used to test the trapezoidal rule after transforming the integral using a ``sinh'' transformation. Also, in
\cite{asy} S. Tsipelis proposed to evaluate the following integral 
\begin{equation}\label{E12}
\int_{-\infty}^{\infty}\frac{\log(\cos^2x)}{1+e^{2\abs{x}}}\,dx.
\end{equation}

Both integrals are of the form $\int_\reel f(x)g(x)dx$ where  $g$ is a $2\pi$-periodic function. The particular case, where
$f$ is of the form $x\mapsto1/(x+z)$, (for some $z\in\comp\setminus\reel$,) was thoroughly investigated in \cite{kou}  using methods
that are different from those discussed in this paper.

\bg
In this note,  we prove a general formula, that
allows us to express this kind of integrals in terms of  the Fourier transform of $f$
 and the Fourier coefficients of $g$.

\bg
Before we proceed, let us recall some standard notation. The spaces $L^1(\reel)$, $L^2(\reel)$, and 
 $L^{2,\text{loc}}(\reel)$ are, respectively, the space of integrable functions,
the space of square integrable functions, and
the space of locally square integrable functions on $\reel$. The spaces $L^1(\reel)$ and $L^2(\reel)$ 
are equipped with the standard norms denoted $\N{\cdot}_1$ and $\N{\cdot}_2$:
\[
 \N{f}_{p} =\left( \int_{\reel}\abs{f(t)}^p\,dt\right)^{1/p},\quad\text{for
 $p=1,2$.}
\]

\bg

We consider also $L^1(\tore)$, (resp. $L^2(\tore)$), the space of integrable, (resp. square integrable),
$2\pi$-periodic functions. The spaces $L^1(\tore)$ and $L^2(\tore)$ are equiped with the
 standard norms denoted $\N{\cdot}_{L^1(\tore)}$ and $\N{\cdot}_{L^2(\tore)}$ defined as folows:
\[
 \N{f}_{L^p(\tore)} =\left(\frac{1}{2\pi}\int_{\tore}\abs{f(t)}^p\,dt\right)^{1/p},\quad\text{for
 $p=1,2$.}
\]

\bg
For a function $f\in L^1(\reel)$ we recall that its Fourier transform $\widehat{f}$ is defined 
by
\[
\widehat{f}(\omega)=\int_{\reel}f(t)e^{-i\omega t}\,dt,\quad\text{for $\omega\in\reel$.}
\]

\bg
And for a $2\pi$-periodic  function $g\in L^1(\tore)$ we recall that 
the exponential Fourier coefficient  $ C_n(g)$ of $g$ is defined 
by
\[
C_n(g)=\frac{1}{2\pi}\int_{\tore}g(t)e^{-i n t}\,dt,\quad\text{for $n\in\ent$,}
\]
or more generally $C_n(g)=\frac{1}{T}\int_0^Tg(x)e^{-2\pi i n x/T}\,dx$, if $g$ is $T$-periodic.
\bg
In section \ref{sec2} we will prove our main results 
and in section \ref{sec3} we will give some detailed examples and applications.
\bg

%%%%%%%%%%%%%%%%%%%%%%%%%%%%%%%%%%%
\bg

\section{\sc The Main Result}\label{sec2}
\nobreak
In this section we state and prove the main theorem.\bg
\begin{theorem}[The mixed Parseval-Plancherel formula]\label{th1}
Consider a function $f$ from $L^{2,{\rm loc}}(\reel)$, and a $2\pi$-periodic function $g$ from $L^{2}(\tore)$. Suppose that
\begin{equation}\label{E:H}
M(f)\egdef\sum_{k\in\ent}\N{\wa_{I_k}f}_2<+\infty,
\end{equation}
where $\wa_{I_k}$ is the characteristic function of the interval ${I_k=[2\pi k,2\pi(k+1)]}$. Then
\begin{equation}\label{E:C}
\int_{\reel}f(x)\adh{g(x)}\,dx =\sum_{n\in\ent}\widehat{f}(n)\adh{C_n(g)}.
\end{equation}
where $\widehat{f}$ is the Fourier transform of $f$, and $(C_n(g))_{n\in\ent}$ is the family of exponential Fourier coefficients of $g$.
\end{theorem}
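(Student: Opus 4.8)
The plan is to reduce the identity to the classical Parseval formula on the torus by means of the periodization (``folding'') operator
\[
 F(x) \egdef \sum_{k\in\ent} f(x+2\pi k).
\]
Before anything else, one checks that both sides of \eqref{E:C} make sense. On each period $I_k$, the Cauchy--Schwarz inequality together with the $2\pi$-periodicity of $g$ gives
\[
 \int_{I_k}\abs{f(x)\adh{g(x)}}\,dx\le \N{\wa_{I_k}f}_2\,\N{\wa_{I_k}g}_2=\sqrt{2\pi}\,\N{g}_{L^2(\tore)}\N{\wa_{I_k}f}_2;
\]
summing over $k\in\ent$ and invoking \eqref{E:H} shows $f\,\adh g\in L^1(\reel)$, and the same computation with $g\equiv 1$ gives $f\in L^1(\reel)$, so that $\widehat f$ is well defined.

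Next I would establish that $F\in L^2(\tore)$, which is the heart of the matter. Since $\int_0^{2\pi}\sum_{k\in\ent}\abs{f(x+2\pi k)}\,dx=\N f_1<+\infty$, the series defining $F$ converges absolutely for almost every $x$ and defines a $2\pi$-periodic locally integrable function. For the $L^2$ bound I would apply the triangle inequality in $L^2(\tore)$ (Minkowski) to that same series:
\[
 \N{\sum_{k\in\ent}\abs{f(\cdot+2\pi k)}}_{L^2(\tore)}\le\sum_{k\in\ent}\N{f(\cdot+2\pi k)}_{L^2(\tore)}=\frac{1}{\sqrt{2\pi}}\sum_{k\in\ent}\N{\wa_{I_k}f}_2=\frac{M(f)}{\sqrt{2\pi}}<+\infty,
\]
so that $\abs F\le\sum_{k\in\ent}\abs{f(\cdot+2\pi k)}$ lies in $L^2(\tore)$, and hence so does $F$. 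This is precisely the place where hypothesis \eqref{E:H} is used. With $F\in L^1(\tore)$ in hand, a routine interchange of summation and integration (legitimate by the $L^1$-domination just noted), followed by the change of variables $y=x+2\pi k$ in which the factors $e^{2\pi i nk}=1$ cancel, yields the Poisson-type relation
\[
 C_n(F)=\frac1{2\pi}\sum_{k\in\ent}\int_0^{2\pi}f(x+2\pi k)e^{-inx}\,dx=\frac1{2\pi}\int_\reel f(y)e^{-iny}\,dy=\frac1{2\pi}\widehat f(n),\qquad n\in\ent.
\]

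Finally, since $F$ and $g$ both belong to $L^2(\tore)$, the classical Parseval identity on $L^2(\tore)$ gives
\[
 \frac1{2\pi}\int_\tore F(x)\adh{g(x)}\,dx=\sum_{n\in\ent}C_n(F)\adh{C_n(g)}.
\]
On the right-hand side I substitute $C_n(F)=\widehat f(n)/(2\pi)$; on the left-hand side I unfold, using $\adh{g(x)}=\adh{g(x+2\pi k)}$ and once more Fubini's theorem (justified because $\sum_{k\in\ent}\int_{I_k}\abs{f\,\adh g}=\N{f\,\adh g}_1<+\infty$), to obtain $\int_\tore F(x)\adh{g(x)}\,dx=\int_\reel f(y)\adh{g(y)}\,dy$. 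Comparing the two expressions establishes \eqref{E:C}. The one delicate point is the membership $F\in L^2(\tore)$ of the periodization; everything else is bookkeeping with Fubini's theorem and an elementary change of variables.
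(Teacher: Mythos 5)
Your proposal is correct and follows essentially the same route as the paper: periodize $f$ into $F=\sum_k f(\cdot+2\pi k)$, verify $F\in L^2(\tore)$ from hypothesis \eqref{E:H} (your Minkowski argument is the paper's normal convergence in $L^2$), identify $C_n(F)=\widehat f(n)/2\pi$, and apply the classical Parseval identity together with the unfolding $\int_\tore F\adh g=\int_\reel f\adh g$. The only difference is cosmetic: you justify the interchanges by $L^1$-domination and Fubini, whereas the paper uses $L^2$-convergence of the partial sums paired against $g\in L^2(\tore)$.
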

\begin{proof} First, note that $\N{\wa_{I_k}f}_1\leq\sqrt{2\pi}\,\N{\wa_{I_k}f}_2$ for every $k\in\ent$. It follows that
\[
\int_\reel\abs{f(x)}\,dx=\sum_{k\in\ent}\N{\wa_{I_k}f}_1\leq\sqrt{2\pi}\, M(f)<+\infty.
\]
Thus, $f$ belongs to $L^1(\reel)$, and we can consider its Fourier transform. Similarly,
\[
\int_\reel\abs{f(x)g(x)}\,dx=\sum_{k\in\ent}\int_{I_k}\abs{f(x)g(x)}\,dx\leq
\sqrt{2\pi}\,M(f)\,\N{g}_{L^2(T)}<+\infty,
\]
and consequently $fg$ belongs also to $L^1(\reel)$.

Now, let us consider the the family of functions $(f_k)_{k\in\ent}$ defined by $f_k(x)=f(x+2\pi k)$.
Clearly $\N{\wa_{m}f_k}_2=\N{\wa_{m+k}f}_2$. Thus
\[
\sum_{k\in\ent}\N{\wa_{m}f_k}_2=M(f)<+\infty
\]
and the series $\sum_{k\in\ent} \wa_{m}f_k$ is normally convergent in $L^2(\reel)$ for every $m\in\ent$. This proves that
the formula $F=\sum_{k\in\ent}f_k $
defines a function $F$ that belongs to $L^{2,{\rm loc}}(\reel)$. Moreover, this function is clearly $2\pi$-periodic, and
$\N{F}_{L^2(\tore)}\leq\frac{1}{\sqrt{2\pi}}\,M(f)$. Now, the classical Parseval's formula, (see \cite[Chap. I, \S 5.]{kats} or
\cite[Chap. 5, \S 3.]{tol},) proves that
\begin{equation}\label{E:th11}
\frac{1}{2\pi}\int_{\tore}F(x)\adh{g(x)}\,dx=\sum_{n\in\ent}C_n(F)\adh{C_n(g)}.
\end{equation}
Using the fact that $\sum_{k=-n}^{n-1}\wa_{I_0}f_k$ converges to $\wa_{I_0}F$ in $L^2(\reel)$, and that
$\wa_{I_0}g\in L^2(\reel)$,  we conclude that
\begin{align}\label{E:th12}
\int_{0}^{2\pi}F(x)\adh{g(x)}\,dx&=\lim_{n\to\infty}\sum_{k=-n}^{ n-1}\int_{0}^{2\pi}f_k(x)\adh{g(x)}\,dx\notag\\
&=\lim_{n\to\infty}\sum_{k=-n}^{n-1}\int_{2\pi k}^{2\pi (k+1)}f(x)\adh{g(x)}\,dx\notag\\
&=\int_{\reel}f(x)\adh{g(x)}\,dx
\end{align}
where,  for the last equality, we used the fact that  $fg\in L^1(\reel)$. 

\bg
Similarly,
\begin{align}\label{E:th13}
2\pi C_n(F)&=\int_{0}^{2\pi}F(x)e^{-int}\,dx =\lim_{n\to\infty}\sum_{k=-n}^{n-1}\int_{0}^{2\pi}f_k(x)e^{-int}\,dx\notag\\
&=\lim_{n\to\infty}\sum_{k=-n}^{n-1}\int_{2\pi k}^{2\pi (k+1)}f(x)e^{-int}\,dx\notag\\
&=\int_{\reel}f(x)e^{-int}\,dx=\widehat{f}(n)
\end{align}
where,  again, we used the fact that  $f\in L^1(\reel)$ for the last equality. Replacing \eqref{E:th12} and \eqref{E:th13} in \eqref{E:th11}, the desired
formula follows.
\end{proof}

\bg
The next corollary is straightforward.
\bg
\begin{corollary}\label{cor1}
Consider a function $f$ from $L^{2,{\rm loc}}(\reel)$, and a $T$-periodic, square integrable function $g$. Suppose that
\begin{equation}\label{E:H1}
M_T(f)\egdef\sum_{k\in\ent}\N{\wa_{[k T,(k+1)T]}f}_2<+\infty,
\end{equation}
Then
\begin{equation}\label{E:C1}
\int_{\reel}f(x)\adh{g(x)}\,dx =\sum_{n\in\ent}\widehat{f}\left(\frac{2\pi n}{T}\right)\adh{C_n(g)}.
\end{equation}
where $\widehat{f}$ is the Fourier transform of $f$, and $(C_n(g))_{n\in\ent} $ is the  family of exponential Fourier coefficients of $g$.
\end{corollary}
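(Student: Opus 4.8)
The plan is to deduce Corollary~\ref{cor1} from Theorem~\ref{th1} by a straightforward rescaling argument, reducing the $T$-periodic situation to the $2\pi$-periodic one. Concretely, I would introduce the dilation $\lambda = T/(2\pi)$ and set
\[
\tilde f(x) = f(\lambda x), \qquad \tilde g(x) = g(\lambda x).
\]
Then $\tilde g$ is $2\pi$-periodic and square integrable on $\tore$, so the pair $(\tilde f, \tilde g)$ is a candidate for Theorem~\ref{th1}; the first task is to check the hypothesis \eqref{E:H} for $\tilde f$. A change of variables gives $\N{\wa_{I_k}\tilde f}_2 = \lambda^{-1/2}\N{\wa_{[kT,(k+1)T]}f}_2$, so $M(\tilde f) = \lambda^{-1/2} M_T(f) < +\infty$ by \eqref{E:H1}. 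Hence Theorem~\ref{th1} applies and yields
\[
\int_{\reel}\tilde f(x)\adh{\tilde g(x)}\,dx = \sum_{n\in\ent}\widehat{\tilde f}(n)\,\adh{C_n(\tilde g)}.
\]

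The remaining work is bookkeeping with the scaling factors. On the left, $x\mapsto \lambda x$ turns $\int_{\reel}\tilde f\,\adh{\tilde g}\,dx$ into $\lambda^{-1}\int_{\reel} f(u)\adh{g(u)}\,du$. On the right, I would record the two elementary identities: $\widehat{\tilde f}(\omega) = \lambda^{-1}\widehat f(\omega/\lambda)$, so that $\widehat{\tilde f}(n) = \lambda^{-1}\widehat f(2\pi n/T)$; and $C_n(\tilde g) = C_n(g)$, since scaling the argument of a periodic function scales its fundamental period in exactly the way that leaves the normalized Fourier coefficients unchanged (this is precisely the content of the ``more generally'' remark in the definition of $C_n$ in Section~\ref{sec1}). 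Substituting these in, the common factor $\lambda^{-1}$ cancels from both sides, and \eqref{E:C1} drops out.

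I do not expect a genuine obstacle here: the only points requiring a line of care are (i) verifying that the hypothesis transfers, i.e. $M(\tilde f)<\infty \iff M_T(f)<\infty$, which is immediate from the substitution in each summand, and (ii) getting the normalization of $\widehat{\tilde f}$ and of $C_n(\tilde g)$ right so that the prefactors match. Both are routine changes of variable, which is why the corollary is labelled ``straightforward.'' I would therefore write the proof as: define $\lambda$ and $\tilde f,\tilde g$; note $M(\tilde f)=\lambda^{-1/2}M_T(f)<\infty$; apply Theorem~\ref{th1}; and then rewrite each of the three quantities in \eqref{E:C} (the integral, $\widehat{\tilde f}(n)$, and $C_n(\tilde g)$) in terms of the original data, observing the cancellation of $\lambda^{-1}$.
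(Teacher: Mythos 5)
Your proposal is correct, and the rescaling argument (substituting $\lambda=T/(2\pi)$, checking $M(\tilde f)=\lambda^{-1/2}M_T(f)$, and tracking the normalizations of $\widehat{\tilde f}$ and $C_n(\tilde g)$) is exactly the intended route: the paper offers no proof at all, stating only that the corollary is ``straightforward.'' All three changes of variable in your computation check out, and the factor $\lambda^{-1}$ does cancel as you claim.
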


%%%%%%%%%%%%%%%%%%%%%%%%%%%%%%%%%%%
\bg
\section{\sc Examples}\label{sec3}
\nobreak
\begin{example} \end{example}

For positive real numbers $a$ and $b$, let $g$ and $f$ be the functions defined by
\[
g(x)=\frac{1}{\cosh a+\cos x},\qquad f(x)=\frac{1}{\cosh(b x)},
\]
It is known \cite[Chap.I, \S 9]{erd} that $\widehat{f}(\omega)=\frac{\pi}{b}f\left(\frac{\pi}{2b}\omega\right)$. Moreover,
it is easy to note that for every $k\in\ent$ we have $\N{\wa_{I_k}f}_2\leq B e^{-2\pi b\abs{k}}$ for some absolute constant $B$.

\bg
On the other hand,
it is easy to check that
\[
g(x)=\frac{1}{\sinh a}\sum_{n\in\ent}(-1)^ne^{-\abs{n}a}e^{i n x},
\]
that is 
\[
C_n(g)=\frac{(-1)^ne^{-\abs{n}a}}{\sinh a},\quad\text{for $n\in \ent$.}
\]
Hence, using Theorem \ref{th1}, we obtain
\[
 \int_{-\infty}^\infty\frac{dx}{(\cosh a+\cos x)\cosh(bx)}=\frac{\pi}{b\sinh a }
+\frac{2\pi}{b\sinh a }\sum_{n=1}^\infty\frac{(-1)^ne^{- n a}}{\cosh(\pi n/(2 b))}
\]
In particular, for $b=1$, we obtain the following expression of the integral \eqref{E11} as a rapidly convergent series: 
\[
 \int_{-\infty}^\infty\frac{dx}{(\cosh a+\cos x)\cosh x}=\frac{\pi}{\sinh a }
+\frac{2\pi}{\sinh a }\sum_{n=1}^\infty\frac{(-1)^ne^{- n a}}{\cosh(\pi n/2)}.
\]
This is a simpler alternative series expansion to the one obtained in \cite{Wal}.
\bg
%%%%%%%%%%%%%%%%%%%%%%%%%%%%%%%%%%%%%%%%%%%
\begin{example} \end{example}

In our second example, let $g$ and $f$ be the functions defined by
\[
g(x)=\log(\cos^2x),\qquad f(x)=\frac{1}{1+e^{2\abs{x}}}.
\]
It is easy to note that for every $k\in\ent$ we have $\N{\wa_{I_k}f}_2\leq B e^{-2\pi \abs{k}}$ for some absolute constant $B$. Moreover,
\begin{align*}
\widehat{f}(\omega)&=2\int_{0}^{\infty}\frac{e^{-2x}}{1+e^{-2x}}\cos(\omega x)\,dx\\
&=2\sum_{k=1}^\infty(-1)^{k-1}\int_0^\infty e^{-2kx}\cos(\omega x)\,dx\\
&=\sum_{k=1}^\infty(-1)^{k-1}\frac{4k}{4k^2+\omega^2}.
\end{align*}
On the other hand, since 
\[g(x)=2\log\abs{1+e^{2ix}} -2\log2=2\Re \Log(1+e^{2ix})-2\log2\]
with $\Log$ being the principal branch of the logarithm, we conclude that for every $n\in\ent$ we have
\[C_{2n+1}(g)=0,\quad\text{and}\quad
C_{2n}(g)=\left\{\begin{matrix}
(-1)^{n-1}/\abs{n}&\text{if}&n\ne0\\
-2\log2&\text{if}&n=0
\end{matrix}\right.
\]
\bg

Using Theorem \ref{th1}, we obtain
\begin{align}\label{ex1}
 \int_{-\infty}^\infty\frac{\log(\cos^2x)}{1+e^{2\abs{x}}}\,dx&=
\sum_{n\in\ent}\widehat{f}(2n)\adh{C_{2n}(g)}\notag\\
&=-2\log2\sum_{k=1}^\infty\frac{(-1)^{k-1}}{k}+2\sum_{n=1}^\infty\left(\sum_{k=1}^\infty(-1)^{k+n}\frac{k}{(k^2+n^2)n}\right)\notag\\
&=-2\log^22+2J
\end{align}
with
\begin{equation}\label{ex2}
J=\sum_{n=1}^\infty\left(\sum_{k=1}^\infty(-1)^{k+n}\frac{k}{(k^2+n^2)n}\right)
\end{equation}
Now, this double series is not absolutely convergent, so we must be carful. First, exchanging the roles of $k$ and $n$ we have
\[
J=\sum_{k=1}^\infty\left(\sum_{n=1}^\infty\frac{(-1)^{k+n}n}{p(n^2+k^2)}\right)
\]
Now, using the properties of convergent alternating we have
\[\sum_{n=1}^\infty\frac{(-1)^{k+n}n}{k(n^2+k^2)}=
\sum_{n=1}^{q-1}\frac{(-1)^{k+n}n}{k(n^2+k^2)}+R_q(k),
\]
with
\[
R_q(k)=\frac{(-1)^k}{k}\sum_{n=q}^\infty\frac{(-1)^{n}n}{ n^2+k^2}
\quad\text{and}\quad \abs{R_q(k)}\leq\frac{1}{k}\cdot\frac{q}{k^2+q^2}
\]
Thus
\[
J=\sum_{n=1}^{q-1}\left(\sum_{k=1}^\infty\frac{(-1)^{k+n}n}{k(n^2+k^2)}\right)+\eps_q
\]
with $\eps_q=\sum_{k=1}^\infty R_q(k)$. But
\[
\eps_q\leq \sum_{k=1}^\infty \frac{q}{k(k^2+q^2)}
\]
Now, since $\frac{q}{k(k^2+q^2)}\leq \frac{1}{2k^2}$ for every $q$,  the series $\sum 1/(2k^{2})<+\infty$ and 
$\lim_{q\to\infty}\frac{q}{k(k^2+q^2)}=0$ for every $k$, we conclude that $\lim_{q\to\infty}\eps_q=0$, So, letting 
$q$ tend to $+\infty$ we conclude that
\begin{equation}\label{ex3}
J=\sum_{n=1}^{\infty}\left(\sum_{k=1}^\infty\frac{(-1)^{k+n}n}{k(n^2+k^2)}\right)
\end{equation}
Taking the  sum of the two expressions \eqref{ex2} and \eqref{ex3} of $J$ we obtain 
\[
2J=\sum_{n=1}^{\infty}\left(\sum_{k=1}^\infty\frac{(-1)^{k+n}}{ n^2+k^2}\left(\frac{n}{k}+\frac{k}{n}\right)\right)=
\sum_{n=1}^{\infty}\left(\sum_{k=1}^\infty\frac{(-1)^{k+n}}{ nk}\right)=(-\log2)^2=\log^22.
\]
Replacing back in \eqref{ex1} we obtain
\[
 \int_{-\infty}^\infty\frac{\log(\cos^2x)}{1+e^{2\abs{x}}}\,dx=- \log^22.
\]

%%%%%%%%%%%%%%%%%%%%%%%%%%%%%%%%%%%%%%%%%%%%
\begin{example} \end{example}

For positive real numbers $a$ and $b$, let $g$ and $f$ be the functions defined by
\[
g(x)=\frac{1}{\cosh a-\cos x},\qquad f(x)= e^{-x^2/(4b)},
\]
It is known \cite[Chap.I, \S 4]{erd} that $\widehat{f}(\omega)=2\sqrt{\pi b}f\left(2 b\omega\right)$.
 Moreover,
\[
C_n(g)=\frac{e^{-\abs{n}a}}{\sinh a},\quad\text{for $n\in \ent$.}
\]
Hence
\[
\int_\reel\frac{e^{-x^2/(4b)}}{\cosh a-\cos x}\,dx
=\frac{2\sqrt{\pi b}}{\sinh a}\left(1
+
2\sum_{n=1}^\infty e^{-a n-b n^2}\right)
\]
In particular, for $b=a$ we get
\[
\int_\reel\frac{e^{-x^2/(4a)}}{\cosh a-\cos x}\,dx
=\frac{2\sqrt{\pi a}}{\sinh a}\left(1+2\sum_{n=1}^\infty e^{-an(n+1)}\right)
=\frac{2\sqrt{\pi a}}{\sinh a}(e^{a/4}\vartheta_2(0,e^{-a})-1),
\]
where $\vartheta_2(u,q)$ is one of the well-known Jacobi Theta functions \cite{Wat}[Chap. XXI].
%%%%%%%%%%%%%%%%%%%%%%%%%%%%%%%%%%%%%%%%%%%

\end{document}